\documentclass[11pt]{amsart}

\usepackage{amsmath,amssymb,amsthm,mathtools}
\usepackage[hidelinks]{hyperref}
\usepackage{booktabs}

\numberwithin{equation}{section}
\newtheorem{theorem}{Theorem}[section]
\newtheorem{lemma}[theorem]{Lemma}
\newtheorem{proposition}[theorem]{Proposition}
\newtheorem{conjecture}{Conjecture}
\theoremstyle{remark}
\newtheorem{remark}[theorem]{Remark}
\newcommand{\C}{\mathbb C}
\newcommand{\N}{\mathbb N}
\newcommand{\R}{\mathbb R}
\newcommand{\Tr}{\operatorname{Tr}}
\newcommand{\rank}{\operatorname{rank}}
\newcommand{\diag}{\operatorname{diag}}
\newcommand{\eps}{\varepsilon}
\newcommand{\dd}{\,dt}

\title[A limit interchange for two-block entropy]{Interchanging limits in the mutual information of two intervals separated by one site}
\author[C. Bellavita]{Carlo Bellavita}
\address{Universit\`a degli Studi di Milano, Milano, Italy.}
\email{carlo.bellavita@unimi.it ; carlo.bellavita@gmail.com}
\author[J. A. Virtanen]{Jani A. Virtanen}
\address{University of Eastern Finland, Joensuu, Finland. University of Reading, Reading, UK. University of Helsinki, Helsinki, Finland}
\email{jani.virtanen@uef.fi}

\begin{document}
\begin{abstract}
In the paper \textit{J.~Phys.~A:~Math.~Theor.}~53 (2020), 345303, the limiting mutual information between two intervals separated by one lattice site in a free-fermion chain was evaluated. The calculation replaces the entropy function by a regularized version depending on $\varepsilon>0$, takes the block lengths to infinity, and only then lets $\varepsilon$ tend to zero. We prove that the opposite order gives the same constant $2\log2-1$. Moreover, we show that the same limit is obtained whenever both block lengths tend to infinity and $\varepsilon$ tends to zero simultaneously, with no restriction on their relative rates. In fact, the error caused by the regularization is bounded by $(1+\varepsilon)\log(1+\varepsilon)-\varepsilon\log\varepsilon$, uniformly in both block lengths.
\end{abstract}

\maketitle

\section{Introduction}

We study the limit of a determinant-based entropy difference for two intervals of lengths $m,n\in\N$ separated by one lattice site in a free-fermion chain. The question is whether the finite-block quantity introduced in \cite{BGIKMMV20} tends to $2\log2-1$ as both lengths tend to infinity.

Let $g(e^{i\theta})=\operatorname{sgn}(\cos\theta)$. The Fourier coefficients of $g$ are given by
\begin{equation}
    g_0=0,\qquad g_j=\frac{2\sin(\pi j/2)}{\pi j}\quad(j\in\mathbb Z\setminus\{0\}).
\end{equation}
Set $T_\ell[g]=(g_{j-k})_{j,k=1}^\ell$ and $u_\ell=(g_1,\ldots,g_\ell)^T$. The two matrices we compare are
\begin{equation}\label{e:matrices}
    A_{m,n}=\begin{pmatrix}
    -T_m[g]&u_mu_n^T\\
    u_nu_m^T&-T_n[g]
    \end{pmatrix},\qquad
    B_{m,n}=\begin{pmatrix}
    -T_m[g]&0\\0&-T_n[g]
    \end{pmatrix}.
\end{equation}
Thus $B_{m,n}$ is obtained from $A_{m,n}$ by setting its off-diagonal blocks to zero. 

The entropy function is
\begin{equation}
    h(t)=-\frac{1+t}{2}\log\frac{1+t}{2}
    -\frac{1-t}{2}\log\frac{1-t}{2},\qquad -1\leq t\leq1,
\end{equation}
where $0\log0=0$. It is continuous on $[-1,1]$, but its derivative has logarithmic singularities at the endpoints. For the contour calculation, it is replaced by $h_\eps(t)=e(1+\eps,t)$, where $\eps>0$ and
\begin{equation}\label{e:regularizer}
    e(x,t)=-\frac{x+t}{2}\log\frac{x+t}{2}
    -\frac{x-t}{2}\log\frac{x-t}{2},\qquad x>1.
\end{equation}
The original entropy function is recovered as $\eps\downarrow0$. Define
\begin{equation}\label{e:traces}
\begin{split}
    J_{m,n}&=\Tr h(B_{m,n})-\Tr h(A_{m,n}),\\
    J_{m,n}^{(\eps)}&=\Tr h_\eps(B_{m,n})-\Tr h_\eps(A_{m,n}).
\end{split}
\end{equation}
Then, for each fixed pair $m,n$, continuity gives
$$
    J_{m,n}=\lim_{\eps\downarrow0}J_{m,n}^{(\eps)}.
$$

The determinant calculation in \cite{BGIKMMV20} uses an elaborate Riemann--Hilbert analysis to first let the block lengths tend to infinity with $\eps$ fixed and then lets $\eps\downarrow0$, proving that
\begin{equation}\label{e:limits}
    \lim_{\eps\downarrow0}\lim_{m,n\to\infty}J_{m,n}^{(\eps)}=2\log2-1.
\end{equation}
The question about the finite-block quantities instead asks whether
\begin{equation}\label{e:limits2}
    \lim_{m,n\to\infty}J_{m,n}
    =\lim_{m,n\to\infty}\lim_{\eps\downarrow0}J_{m,n}^{(\eps)}
    =2\log2-1.
\end{equation}
The first formula alone does not justify reversing the limits. Convergence as $\eps\downarrow0$ for each fixed pair of block lengths does not say how small the approximation error remains when those lengths grow.

We prove that, for every $m,n\geq1$ and $\eps>0$,
\begin{equation}\label{e:bound-intro}
    0\leq J_{m,n}-J_{m,n}^{(\eps)}
    \leq(1+\eps)\log(1+\eps)-\eps\log\eps.
\end{equation}
The right side tends to zero as $\eps\downarrow0$ and is independent of both block lengths. Together with \eqref{e:limits}, this proves \eqref{e:limits2}. It also shows that
$$
    J_{m_j,n_j}^{(\eps_j)}\longrightarrow2\log2-1
$$
whenever $\eps_j>0$, $\eps_j\to0$ and $\min\{m_j,n_j\}\to\infty$. Thus the same limit is obtained when the approximation is removed and both block lengths tend to infinity simultaneously, with no restriction on their relative rates.

The bound is possible because we estimate the difference of the two traces rather than each trace separately. Separate estimates would introduce a factor proportional to $m+n$. In contrast, $A_{m,n}-B_{m,n}$ has rank two, with one positive and one negative eigenvalue, so the two eigenvalue counting functions differ by at most one, independently of the matrix size. A classical finite-rank trace estimate applied to $h_\eps-h$ gives the upper bound in \eqref{e:bound-intro}, while convexity of $h_\eps-h$ gives its sign. We include elementary proofs of these facts. The trace identity behind the estimate is the finite-dimensional form of Kre\u{\i}n's trace formula \cite{BY93, Krein53}; the eigenvalue counting bound follows from the min--max principle \cite{Bhatia97}. The determinant asymptotics and the evaluation of the limiting constant remain those of \cite{BGIKMMV20}; no new determinant asymptotics are needed.

We conclude Section~\ref{sec:passage} with numerical values of $J_{m,n}$ computed directly from \eqref{e:matrices}, without regularization or determinant asymptotics. These are consistent with \eqref{e:limits2} and suggest a first-order expansion in $1/m+1/n$, formulated as Conjecture~\ref{conj:rate}.

The contour-determinant method has earlier examples in the XX-chain calculation of Jin--Korepin \cite{JK04} and the XY-chain work of Its--Jin--Korepin \cite{IJK05,IJK07}. In~\cite{IJK07}, the interchange of normalized contour limits is explicitly stated as a conjecture immediately after equation~(28), on page~9. The bordered-determinant construction for disjoint subsystems was introduced by Jin--Korepin \cite{JK11} and underlies the one-site matrices considered here. Our result concerns the difference \eqref{e:traces}; the bounded-rank comparison used in its proof does not settle the single-interval limit questions in \cite{IJK07, JK04}.

All logarithms are natural, all traces are unnormalized, and $m,n\to\infty$ means $\min\{m,n\}\to\infty$.

\section{Matrix properties and the main results}
All entries here are real, and both matrices are symmetric. The matrix $A_{m,n}$ is the $k=1$ matrix of \cite[equations (2.1)--(2.3)]{BGIKMMV20} after reversing the coordinates in its first block. Indeed, equation~(2.9) there gives the cross-block entry $g_{i-m-1}g_{-j}$, which becomes $g_rg_j$ under $i=m+1-r$. Since $g_j=g_{-j}$, this reversal leaves the Toeplitz diagonal block unchanged, and the two matrices are orthogonally similar. Consequently, by \cite[equations (2.4) and (2.10)]{BGIKMMV20}, $\Tr h(A_{m,n})$ is the von Neumann entropy of the union of the two intervals in \cite[equation (2.8)]{BGIKMMV20}, while $\Tr h(B_{m,n})$ is the sum of their individual entropies. Hence $J_{m,n}$ is the mutual information between the two intervals; the residue calculation relating the contour formula to this trace expression is carried out in Section~\ref{sec:passage}.

\begin{lemma}\label{lem:contraction}
The matrices $A_{m,n}$ and $B_{m,n}$ are self-adjoint contractions, and
\begin{equation}
    \rank(A_{m,n}-B_{m,n})=2.
\end{equation}
The two nonzero eigenvalues of $A_{m,n}-B_{m,n}$ are $\|u_m\|\,\|u_n\|$ and $-\|u_m\|\,\|u_n\|$.
\end{lemma}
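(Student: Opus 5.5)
We establish self-adjointness and the contraction property of $B_{m,n}$ first, then pass to $A_{m,n}$, and finally compute the spectrum of the rank-two difference directly.

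\emph{Self-adjointness and $B_{m,n}$.} Since $g$ is real and even, $g_j=g_{-j}\in\R$. Hence $T_\ell[g]$ is real symmetric, and so are $A_{m,n}$ and $B_{m,n}$. For the contraction property we use the standard fact that a Toeplitz matrix is a compression of the multiplication operator by its symbol. Concretely, for $x\in\C^\ell$ put $p(e^{i\theta})=\sum_{k=1}^\ell x_k e^{ik\theta}$. Then
$$
\langle T_\ell[g]x,x\rangle=\frac1{2\pi}\int_0^{2\pi} g(e^{i\theta})\,|p(e^{i\theta})|^2\,d\theta .
$$
Because $|g|=1$ almost everywhere, this quantity has absolute value at most $\|p\|_{L^2}^2=\|x\|^2$. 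So $\|T_\ell[g]\|\le1$, and the block-diagonal matrix $B_{m,n}$ is a self-adjoint contraction.

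\emph{Contraction property of $A_{m,n}$.} Here I would realize the full matrix as a compression of a unitary, self-adjoint operator. Consider the sequence space $\ell^2(\mathbb Z)$ with the Laurent operator $L=(g_{j-k})_{j,k\in\mathbb Z}$. Since $g^2=1$, we have $L^2=I$ and $L=L^*$, so $\|L\|=1$. Take the index sets $\{-m,\dots,-1\}$ and $\{1,\dots,n\}$, which are separated by the single site $0$. The compression of $L$ to their union has off-diagonal blocks $g_{r+j}$. These are not rank one, so this compression is not literally $A_{m,n}$.

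To produce the rank-one blocks, I would take the Schur complement that eliminates site $0$, following the bordered-determinant construction of \cite{BGIKMMV20}. An alternative route is more direct. Write $A=-\operatorname{diag}(T_m,T_n)+\begin{pmatrix}0&ab^T\\ ba^T&0\end{pmatrix}$ with $a=u_m$, $b=u_n$. One can then check $\|Ax\|\le\|x\|$ using the identity $L^2=I$ restricted to the row through site $0$, in the form
$$
\sum_{j\neq0}g_j^2=1-g_0^2=1 .
$$
Together with the compression identity $T_\ell^2+u_\ell u_\ell^T+H_\ell H_\ell^T=I$ (with $H_\ell$ a Hankel part), this should reduce $I-A^2$ to a positive semidefinite sum of squares.

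This step is the main obstacle: organizing $I-A_{m,n}^2$ as a manifest sum of Gram matrices. My first attempt will be to compute $A^2$ blockwise and substitute $T_\ell^2=I-u_\ell u_\ell^T-(\text{Hankel})^2$, which comes from the relation $L^2=I$ restricted to the half-line. I expect the cross terms to collapse because $g_0=0$.

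\emph{The rank-two difference.} This part is routine. Write
$$
D=A_{m,n}-B_{m,n}=\begin{pmatrix}0&u_mu_n^T\\u_nu_m^T&0\end{pmatrix}.
$$
Both $u_m$ and $u_n$ are nonzero, since $g_1=2/\pi$. The range of $D$ lies in $\operatorname{span}\{(u_m,0),(0,u_n)\}$, so the rank of $D$ is at most $2$. Set $\alpha=\|u_m\|\,\|u_n\|$ and consider the vectors $v_\pm=(u_m/\|u_m\|,\pm u_n/\|u_n\|)$. A direct computation gives $Dv_\pm=\pm\alpha v_\pm$. This exhibits the eigenvalues $\pm\alpha\ne0$, which shows the rank is exactly $2$ and identifies the nonzero spectrum as claimed.
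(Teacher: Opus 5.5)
Your arguments for self-adjointness, for $-I\le B_{m,n}\le I$ via the compression of multiplication by $g$, and for the rank and the eigenvalues $\pm\|u_m\|\,\|u_n\|$ (via the explicit eigenvectors $v_\pm$) are correct and agree with the paper. The gap is the contraction property of $A_{m,n}$. This is the only nontrivial part of the lemma, since a rank-two perturbation of a contraction need not be a contraction, and you flag it as the main obstacle without proving it.

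Neither route you suggest closes it as written. Eliminating site $0$ from the compression of $I\pm L$ to $\{-m,\dots,-1,0,1,\dots,n\}$ gives $I\pm M-cc^T\ge0$ with $c=(u_m,u_n)^T$ after reversing the first block. But $M$ still carries the Hankel block $(g_{r+j})_{r,j}$, so the off-diagonal block of this Schur complement is $\pm(g_{r+j})_{r,j}-u_mu_n^T$. The Hankel term never disappears, and nothing about $A_{m,n}$ follows. The computation of $I-A_{m,n}^2$ is only an expectation. After substituting the tail identity for $T_\ell^2$ (which in fact has two Hankel tails, from $p\le-1$ and $p\ge\ell+1$), you still have to control the off-diagonal block $T_mu_mu_n^T+u_mu_n^TT_n$. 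Nothing in the sketch shows that the result is a sum of Gram matrices. The identity $\sum_{j\ne0}g_j^2=1$ by itself only yields $\|u_m\|^2+\|u_n\|^2\le1$, which is necessary but far from sufficient.

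The missing idea is to work with the forms $I\pm A_{m,n}$ rather than with $A_{m,n}^2$, and to take the Schur complement \emph{separately for each interval}. Because $g_0=0$ and $g_{-j}=g_j$, one has the bordering $T_{\ell+1}[g]=\begin{pmatrix}0&u_\ell^T\\u_\ell&T_\ell[g]\end{pmatrix}$. The Schur complement of the top-left entry $1$ in $I\pm T_{\ell+1}[g]\ge0$ then gives $I\pm T_\ell[g]-u_\ell u_\ell^T\ge0$. Hence, for $s\in\{-1,1\}$,
\[
I+sA_{m,n}=\diag\bigl(I-sT_m[g]-u_mu_m^T,\ I-sT_n[g]-u_nu_n^T\bigr)+ww^T,\qquad w=\begin{pmatrix}u_m\\ su_n\end{pmatrix},
\]
which is a sum of two positive semidefinite matrices, so $-I\le A_{m,n}\le I$. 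In effect, each interval is bordered by its own copy of the separating site instead of one shared site. The coupling $u_mu_n^T$ is then exactly the cross term of $ww^T$, and no Hankel terms ever appear.
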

\begin{proof}
The matrix $T_\ell[g]$ is a compression of multiplication by the real-valued function $g$ on the unit circle, so $-I\leq T_\ell[g]\leq I$, which gives the assertion for $B_{m,n}$. Moreover,
$$
    T_{\ell+1}[g]=\begin{pmatrix}0&u_\ell^T\\u_\ell&T_\ell[g]\end{pmatrix},
$$
because $g_0=0$ and $g_j=g_{-j}$. Taking the Schur complement of the top-left entry in $I\pm T_{\ell+1}[g]\geq0$ gives
\begin{equation}\label{e:schur}
    I\pm T_\ell[g]-u_\ell u_\ell^T\geq0.
\end{equation}
Consequently, for $s\in\{-1,1\}$,
$$
\begin{aligned}
    I+sA_{m,n}&=\diag\bigl(I-sT_m[g]-u_mu_m^T,\ I-sT_n[g]-u_nu_n^T\bigr)\\
    &\quad+\begin{pmatrix}u_m\\s u_n\end{pmatrix}
    \begin{pmatrix}u_m\\s u_n\end{pmatrix}^{\!T}\geq0,
\end{aligned}
$$
the first summand being positive semidefinite by \eqref{e:schur} and the second by inspection. Hence, $-I\le A_{m,n}\le I$. Finally, put $E=A_{m,n}-B_{m,n}$. Its range is contained in the span of $(u_m,0)^T$ and $(0,u_n)^T$. Both vectors are nonzero since $g_1=2/\pi$. On the corresponding normalized basis, $E$ has the matrix
$$
    \begin{pmatrix}
    0&\|u_m\|\,\|u_n\|\\
    \|u_m\|\,\|u_n\|&0
    \end{pmatrix}.
$$
This proves the rank and eigenvalue assertions.
\end{proof}

The two nonzero eigenvalues neither vanish nor escape as the blocks grow. By Parseval's identity $\sum_{j\in\mathbb Z}g_j^2=1$, and since $g_0=0$ and $g_{-j}=g_j$, the quantity $\|u_\ell\|^2=\sum_{j=1}^\ell g_j^2$ is nondecreasing in $\ell$ with limit $1/2$. Hence
$$
    \frac{4}{\pi^2}\leq\|u_m\|\,\|u_n\|<\frac12,\qquad m,n\geq1,
$$
and the lower bound is attained at $m=n=1$.

\begin{remark}
The contraction property of the full matrix is not an automatic consequence of the corresponding property of the Toeplitz diagonal blocks because a rank-two perturbation need not preserve contractivity. Lemma~\ref{lem:contraction} ensures that the entropy traces in \eqref{e:traces} are defined and that the prescribed contour encloses both spectra. Since $B_{m,n}$ is a contraction, the denominator of the determinant ratio \eqref{e:ratio}, defined in Section~\ref{sec:passage}, has no zeros outside $[-1,1]$; the corresponding spectral inclusion for $A_{m,n}$ makes the ratio holomorphic and nonvanishing there.
\end{remark}

Let
\begin{equation}\label{e:w}
    w(\eps)=(1+\eps)\log(1+\eps)-\eps\log\eps,\qquad \eps>0.
\end{equation}
As $\eps\downarrow0$, we have $w(\eps)=\eps\log(1/\eps)+O(\eps)$.

\begin{theorem}\label{thm:uniform}
For all $m,n\geq1$ and $\eps>0$,
\begin{equation}\label{e:uniform}
    0\leq J_{m,n}-J_{m,n}^{(\eps)}\leq w(\eps).
\end{equation}
In particular, $J_{m,n}^{(\eps)}\to J_{m,n}$ as $\eps\downarrow0$ uniformly in both block lengths.
\end{theorem}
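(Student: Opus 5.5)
Set $f=h-h_\eps$ on $[-1,1]$. Then $J_{m,n}-J^{(\eps)}_{m,n}=\Tr f(B_{m,n})-\Tr f(A_{m,n})$, and the plan is to control this difference through the eigenvalue counting functions of the two matrices.

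\emph{Properties of $f$.} Write $\phi(y)=-y\log y$. Then
\[
h(t)=\phi\bigl(\tfrac{1+t}{2}\bigr)+\phi\bigl(\tfrac{1-t}{2}\bigr),\qquad
h_\eps(t)=\phi\bigl(\tfrac{1+\eps+t}{2}\bigr)+\phi\bigl(\tfrac{1+\eps-t}{2}\bigr).
\]
Since $\phi''(y)=-1/y$, we get
\[
h''(t)=-\frac{1}{1-t^2},\qquad h_\eps''(t)=-\frac{1}{(1+\eps)^2-t^2}.
\]
Hence $f''(t)=\frac{1}{(1+\eps)^2-t^2}-\frac{1}{1-t^2}<0$ on $(-1,1)$, so $f$ is concave. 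It is also even, so $f'(0)=0$, and therefore $f$ is nondecreasing on $[-1,0]$ and nonincreasing on $[0,1]$. Consequently the total variation of $f$ on $[-1,1]$ equals $2\bigl(f(0)-f(1)\bigr)$. We compute $f(0)=\log 2-\phi\text{-terms}$, namely
\[
f(0)=\log2-2\phi\bigl(\tfrac{1+\eps}{2}\bigr)=\log2+(1+\eps)\log\tfrac{1+\eps}{2}.
\]
Since $h(1)=0$ and $h_\eps(1)=\phi(1+\eps/2)+\phi(\eps/2)$,
\[
f(1)=(1+\tfrac\eps2)\log(1+\tfrac\eps2)+\tfrac\eps2\log\tfrac\eps2.
\]
A short algebraic simplification should give $2\bigl(f(0)-f(1)\bigr)$ in closed form, and the aim is to check that it is at most $2w(\eps)$. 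This factor of $2$ suggests the trace bound must gain a factor $\tfrac12$ somewhere, so the bookkeeping needs care.

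\emph{Upper bound via counting functions.} Let $N_A(s)$ and $N_B(s)$ be the numbers of eigenvalues that are $\le s$. By Lemma~\ref{lem:contraction}, both spectra lie in $[-1,1]$, and $A_{m,n}-B_{m,n}$ has exactly one positive and one negative eigenvalue. Weyl interlacing (min--max) then gives $|N_A(s)-N_B(s)|\le1$ for all $s$. Integrating by parts,
\[
\Tr f(B)-\Tr f(A)=\int_{-1}^{1}f'(s)\bigl(N_A(s)-N_B(s)\bigr)\,ds .
\]
This identity uses $N_A(1)=N_B(1)$ and $N_A(-1^-)=N_B(-1^-)=0$. Note that $f'$ has logarithmic singularities at $\pm1$, but they are integrable, so the identity is justified by approximating on $[-1+\delta,1-\delta]$ and letting $\delta\to0$. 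The crude bound $\int|f'|=2(f(0)-f(1))$ may be too big by a factor of two.

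To sharpen it I would use the finer structure: since $A=B+P-Q$ with $P,Q$ rank one and positive, one expects $0\le N_B-N_A\le1$ where $P$ dominates and the reverse where $Q$ dominates. I have not verified this precisely. The alternative is to use the symmetry. The block matrix $S=\diag(I,-I)$ satisfies $SAS=A'$, the matrix with negated off-diagonal blocks, which has the same spectrum as $A$ under $t\mapsto$ itself. If instead the spectra of $A$ and $B$ are symmetric under $t\mapsto -t$, then the integral over $[-1,1]$ reduces to twice an integral over $[0,1]$. The sign structure there could then give the bound $f(0)-f(1)$, and I would check that $f(0)-f(1)\le w(\eps)$. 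I expect exactly this sharp constant to be the main obstacle.

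\emph{Lower bound.} Here I would use concavity of $f$ directly. Write $A=B+E$ with $E$ of trace zero. Since $f$ is concave, the map $X\mapsto\Tr f(X)$ is concave on self-adjoint contractions. Moreover $B$ is the pinching of $A$ onto the block diagonal, $B=\tfrac12(A+SAS)$. Concavity and unitary invariance then give
\[
\Tr f(B)\ge\tfrac12\Tr f(A)+\tfrac12\Tr f(SAS)=\Tr f(A),
\]
which is the sign assertion $J_{m,n}-J^{(\eps)}_{m,n}\ge0$. Uniform convergence in $m,n$ then follows immediately from the upper bound $w(\eps)$, which does not depend on the block lengths and tends to $0$ as $\eps\downarrow0$.
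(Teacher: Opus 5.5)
Your upper-bound framework is the paper's own: $|N_A(s)-N_B(s)|\le1$ because $A_{m,n}-B_{m,n}$ has one positive and one negative eigenvalue, the integration-by-parts (finite Kre\u{\i}n) identity, and hence $|J_{m,n}-J_{m,n}^{(\eps)}|\le\int_{-1}^1|f'(t)|\,dt$. The gap is that you never finish this bound, and you misdiagnose the last step. You expect $\int_{-1}^1|f'|=2\bigl(f(0)-f(1)\bigr)$ to be of size $2w(\eps)$, look for an extra factor $\tfrac12$, and leave the constant as ``the main obstacle''. No such factor is needed. Your symmetry idea could not supply one anyway: even granting symmetric spectra, $f'$ and $N_A-N_B$ are then both odd (a.e.), so the integrand is even and folding onto $[0,1]$ gives the same bound. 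The missing step is a short comparison. For $-1<t<1$,
\[
-2f'(t)=\log\Bigl(1+\frac{\eps}{1-t}\Bigr)-\log\Bigl(1+\frac{\eps}{1+t}\Bigr),
\]
which is nonnegative on $(0,1)$, so
\[
\int_{-1}^1|f'(t)|\,dt\le\int_0^1\log\Bigl(1+\frac{\eps}{1-t}\Bigr)dt=\int_0^1\log\Bigl(1+\frac{\eps}{s}\Bigr)ds=w(\eps).
\]
Alternatively, your closed form is $2\bigl(f(0)-f(1)\bigr)=2(1+\eps)\log(1+\eps)-\eps\log\eps-(2+\eps)\log(2+\eps)+2\log2$. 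Then $w(\eps)$ minus this equals $(2+\eps)\log(2+\eps)-(1+\eps)\log(1+\eps)-2\log2$, which vanishes at $\eps=0$ and has derivative $\log\frac{2+\eps}{1+\eps}>0$. Both quantities are $\eps\log(1/\eps)+O(\eps)$, so your ``crude'' bound is already enough. With this check your argument gives the upper half of \eqref{e:uniform}; as written, the proposal leaves it unproved.

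Your sign argument is correct but differs from the paper's. You write $B_{m,n}=\tfrac12(A_{m,n}+SA_{m,n}S)$ and use concavity of $X\mapsto\Tr f(X)$ together with unitary invariance. The paper instead applies Jensen's inequality to the spectral measures of $A_{m,n}$ at a block-supported eigenbasis of $B_{m,n}$; in that basis the diagonal entries of $A_{m,n}$ are the eigenvalues of $B_{m,n}$. That Peierls-type step is how trace concavity is usually proved. So your version is shorter if you cite trace concavity, while the paper's is self-contained. One small slip: $h_\eps''(t)=-\frac{1+\eps}{(1+\eps)^2-t^2}$, not $-\frac{1}{(1+\eps)^2-t^2}$. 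Concavity still holds, since $f''(t)=-\frac{\eps(1+\eps+t^2)}{(1-t^2)((1+\eps)^2-t^2)}<0$.
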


The bound \eqref{e:uniform} is proved in Section~\ref{sec:rank}; it uses no Toeplitz asymptotics. To identify the limiting value, we use two statements about the regularized quantities:
\begin{equation*}\tag{P1}\label{e:P1}
    L_\eps:=\lim_{m,n\to\infty}J_{m,n}^{(\eps)}
    \quad\text{exists for every fixed }\eps>0,
\end{equation*}
\begin{equation*}\tag{P2}\label{e:P2}
    \lim_{\eps\downarrow0}L_\eps=L,\qquad L=2\log2-1.
\end{equation*}
Together these are the content of \cite[Theorem 2.1]{BGIKMMV20}, whose iterated limit presupposes \eqref{e:P1}. Since our argument uses \eqref{e:P1} on its own, we make it explicit in Section~\ref{sec:passage}, where we also identify $J_{m,n}^{(\eps)}$ with the contour integral occurring there.

\begin{theorem}\label{thm:main}
For the matrices \eqref{e:matrices} and the trace differences \eqref{e:traces},
\begin{equation}\label{e:main1}
    \lim_{m,n\to\infty}J_{m,n}=2\log2-1,
\end{equation}
the two iterated limits agree,
\begin{equation}\label{e:main2}
    \lim_{m,n\to\infty}\lim_{\eps\downarrow0}J_{m,n}^{(\eps)}
    =\lim_{\eps\downarrow0}\lim_{m,n\to\infty}J_{m,n}^{(\eps)}
    =2\log2-1,
\end{equation}
and there is joint convergence: for every sequence $\eps_j>0$ with $\eps_j\to0$ and every pair of sequences $m_j,n_j\in\N$ with $\min(m_j,n_j)\to\infty$,
\begin{equation}\label{e:main3}
    J_{m_j,n_j}^{(\eps_j)}\longrightarrow2\log2-1.
\end{equation}
\end{theorem}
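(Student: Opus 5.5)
The plan is to combine Theorem~\ref{thm:uniform} with (P1) and (P2) by a standard $\varepsilon/3$-type argument. Theorem~\ref{thm:uniform} is taken as given and supplies the only structural ingredient. It says that $|J_{m,n}-J_{m,n}^{(\eps)}|\le w(\eps)$ with a bound independent of $m,n$, and that $w(\eps)\to0$ as $\eps\downarrow0$ (since $w(\eps)=\eps\log(1/\eps)+O(\eps)$). No further matrix analysis is needed; everything else is elementary analysis of double limits.

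First, to prove \eqref{e:main1}, fix $\eta>0$.
\begin{itemize}
\item By (P2), choose $\eps>0$ with $|L_\eps-L|<\eta$ and simultaneously $w(\eps)<\eta$. This is possible since both conditions hold for all sufficiently small $\eps$.
\item With this $\eps$ fixed, (P1) gives $N$ such that $|J_{m,n}^{(\eps)}-L_\eps|<\eta$ whenever $\min\{m,n\}\ge N$.
\end{itemize}
Then for such $m,n$,
$$
|J_{m,n}-L|\le|J_{m,n}-J_{m,n}^{(\eps)}|+|J_{m,n}^{(\eps)}-L_\eps|+|L_\eps-L|<3\eta,
$$
where the first term is bounded by Theorem~\ref{thm:uniform}. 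This proves \eqref{e:main1}.

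Second, \eqref{e:main2} follows at once. The inner limit $\lim_{\eps\downarrow0}J_{m,n}^{(\eps)}=J_{m,n}$ holds for each fixed pair by continuity, or directly by Theorem~\ref{thm:uniform}. So the left iterated limit equals $\lim_{m,n\to\infty}J_{m,n}=L$ by the first step. The right iterated limit equals $L$ by (P1)--(P2).

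Third, for \eqref{e:main3}, write
$$
J_{m_j,n_j}^{(\eps_j)}-L=\bigl(J_{m_j,n_j}^{(\eps_j)}-J_{m_j,n_j}\bigr)+\bigl(J_{m_j,n_j}-L\bigr).
$$
The first term is bounded in absolute value by $w(\eps_j)\to0$, uniformly, by Theorem~\ref{thm:uniform}. The second tends to zero by \eqref{e:main1}, since $\min(m_j,n_j)\to\infty$. No relation between the rates is needed, because the bound in the first term does not depend on $m_j,n_j$.

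The only genuine difficulty is thus Theorem~\ref{thm:uniform} itself, which is assumed here. Within this proof, the one point to be careful about is that (P1) is used for a single fixed $\eps$, chosen before sending $m,n\to\infty$. This is exactly why (P1) has to be stated separately from the iterated limit in \cite[Theorem 2.1]{BGIKMMV20}.
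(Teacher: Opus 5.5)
Your proposal is correct and follows the paper's own argument: a triangle inequality combining the uniform bound $|J_{m,n}-J_{m,n}^{(\eps)}|\le w(\eps)$ with (P1) at a fixed $\eps$ and (P2), then the same splitting for the joint limit. Your explicit choice of $\eps$ and then $N$ to get $3\eta$ is equivalent to the paper's step, which takes $\limsup_{m,n\to\infty}|J_{m,n}-L|\le w(\eps)+|L_\eps-L|$ and then lets $\eps\downarrow0$.
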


Equation \eqref{e:main1} is the assertion recorded in \cite[equation (10.1)]{BGIKMMV20}. We prove the uniform estimate next and complete the limit argument in Section~\ref{sec:passage}.

\section{A finite-rank regularization estimate}\label{sec:rank}

The proof of Theorem~\ref{thm:uniform} rests on one classical estimate: If two self-adjoint matrices differ by a perturbation of rank at most $r$, then the traces of $f$ at the two matrices differ by at most $r\int |f'|$, independently of the dimension. Two standard facts lie behind this estimate. First, for an absolutely continuous function $f$, if $N_X(t)$ denotes the number of eigenvalues of $X$ not exceeding $t$, then
$$
    \Tr\bigl(f(X)-f(Y)\bigr)=\int_{-1}^{1}f'(t)\,\xi(t;X,Y)\dd,\qquad
    \xi(t;X,Y)=N_Y(t)-N_X(t).
$$
This is the finite-dimensional case of Kre\u{\i}n's trace formula, with $\xi$ the spectral shift function of the pair \cite{Krein53,BY93}. Second, a perturbation of rank $r$ changes the eigenvalue counting function by at most $r$, by the min--max principle \cite[Chapter III]{Bhatia97}. We give the short proof because we need a refinement that is sensitive to the signs of the eigenvalues of $X-Y$, that is, only $\max\{r_+,r_-\}$ enters, rather than $r_++r_-$. Since $A_{m,n}-B_{m,n}$ has one positive and one negative eigenvalue, this refinement gives the constant $1$ in \eqref{e:uniform}.

\begin{lemma}\label{lem:rank}
Let $X,Y$ be self-adjoint matrices of the same dimension, with spectra in $[-1,1]$, and suppose that $\rank(X-Y)\leq r$. For every absolutely continuous $f:[-1,1]\to\R$,
\begin{equation}\label{e:rankbound}
    |\Tr f(X)-\Tr f(Y)|\leq r\int_{-1}^1|f'(t)|\dd.
\end{equation}
More precisely, if $r_+$ and $r_-$ are the numbers of positive and negative eigenvalues of $X-Y$, respectively, then $r$ in \eqref{e:rankbound} may be replaced by $\max\{r_+,r_-\}$.
\end{lemma}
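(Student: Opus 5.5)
The plan is to prove the finite-dimensional trace formula first, then bound the spectral shift function using min--max, and finally integrate.

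\emph{Step 1 (trace formula).} Let $x_1\le\dots\le x_d$ and $y_1\le\dots\le y_d$ be the eigenvalues of $X$ and $Y$, all in $[-1,1]$. Since $f$ is absolutely continuous, $f(x_k)-f(y_k)=\int_{-1}^1 f'(t)\bigl(\mathbf 1_{[y_k,x_k)}(t)-\mathbf 1_{[x_k,y_k)}(t)\bigr)\dd$. Summing over $k$, the integrand becomes $\sum_k \bigl(\mathbf 1_{\{y_k\le t\}}-\mathbf 1_{\{x_k\le t\}}\bigr)=N_Y(t)-N_X(t)=\xi(t)$ for almost every $t$. This gives
$$\Tr f(X)-\Tr f(Y)=\int_{-1}^1 f'(t)\,\xi(t)\dd,$$
and therefore $|\Tr f(X)-\Tr f(Y)|\le \sup_t|\xi(t)|\int|f'|$. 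It remains to show that $-r_-\le \xi(t)\le r_+$. This bound implies $|\xi|\le\max\{r_+,r_-\}\le r$.

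\emph{Step 2 (counting bound).} Write $X-Y=P-Q$, where $P,Q\ge0$ have ranks $r_+$ and $r_-$ (spectral decomposition). Then $X\le Y+P$, and by Weyl monotonicity (min--max) $x_k\le\lambda_k(Y+P)$. Interlacing for a positive perturbation of rank $r_+$ gives $\lambda_k(Y+P)\le y_{k+r_+}$. Concretely, take the subspace spanned by the eigenvectors of $y_1,\dots,y_{k+r_+}$ intersected with $\ker P$. It has dimension at least $k$, and on it $Y+P=Y\le y_{k+r_+}$, so the min--max characterization yields $\lambda_k(Y+P)\le y_{k+r_+}$.

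The symmetric argument uses $Y+P\ge Y$, so $\lambda_k(Y+P)\ge y_k$. Subtracting $Q$ then gives $x_k=\lambda_k(Y+P-Q)\ge\lambda_{k-r_-}(Y+P)\ge y_{k-r_-}$, by the same kernel-intersection argument applied to $-Q$.

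Combining, $y_{k-r_-}\le x_k\le y_{k+r_+}$. In counting terms: if $y_{j}\le t$ with $j=k+r_+$, we cannot conclude directly, so we argue via $N$. From $x_k\le y_{k+r_+}$ we get $N_X(t)\ge N_Y(t)-r_+$. From $y_{k-r_-}\le x_k$ we get $N_X(t)\le N_Y(t)+r_-$. Hence $-r_-\le\xi(t)\le r_+$.

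\emph{Main obstacle.} The main difficulty is keeping the separate signs straight in Step 2, specifically verifying the direction of each inequality so that only $\max\{r_+,r_-\}$, rather than $r_++r_-$, bounds $|\xi|$. The kernel-intersection dimension count is where I would be most careful. Handling indices outside $1,\dots,d$, by the conventions $y_j=-\infty$ for $j<1$ and $y_j=+\infty$ for $j>d$, is routine.
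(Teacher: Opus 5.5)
Your proof is correct and takes essentially the same route as the paper. Both use the counting-function form of the trace formula (you pair sorted eigenvalues, the paper anchors at $f(1)$) together with a min--max codimension count giving $-r_-\le N_Y(t)-N_X(t)\le r_+$. The one organizational difference is that you go through the index interlacing $y_{k-r_-}\le x_k\le y_{k+r_+}$ via the intermediate matrix $Y+P$, whereas the paper gets the counting bound in one step by intersecting $\ker P$ (the nonpositive spectral subspace of $X-Y$) directly with the spectral subspace of $Y$ for $(-\infty,t]$. Your aside that one ``cannot conclude directly'' is also mistaken: $x_k\le y_{k+r_+}\le t$ is exactly the direct conclusion.
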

\begin{proof}
Put $E=X-Y$. The spectral subspace on which $E$ is nonpositive has codimension $r_+$. Its intersection with the spectral subspace of $Y$ for $(-\infty,t]$ therefore has dimension at least $N_Y(t)-r_+$. On this intersection,
$$
    \langle Xv,v\rangle
    =\langle Yv,v\rangle+\langle Ev,v\rangle\leq t\|v\|^2.
$$
The min--max principle gives $N_X(t)\geq N_Y(t)-r_+$. Applying the same argument to $Y-X$ gives $N_Y(t)\geq N_X(t)-r_-$. Consequently,
\begin{equation}\label{e:counts}
    -r_+\leq N_X(t)-N_Y(t)\leq r_-,\qquad t\in\R.
\end{equation}
In particular, $|N_X(t)-N_Y(t)|\leq\max\{r_+,r_-\}\leq r$.

For each $x\in[-1,1]$, absolute continuity gives
$$
    f(x)=f(1)-\int_{-1}^1 f'(t)\,\mathbf 1_{[x,1]}(t)\dd.
$$
Summing over the eigenvalues of $X$ and $Y$, the terms involving $f(1)$ cancel because the dimensions agree. Thus,
\begin{equation}\label{e:counttrace}
    \Tr f(X)-\Tr f(Y)=-\int_{-1}^1 f'(t)\bigl(N_X(t)-N_Y(t)\bigr)\dd.
\end{equation}
Combining \eqref{e:counttrace} and \eqref{e:counts} proves both assertions.
\end{proof}

\begin{proposition}\label{prop:regularization}
Under the hypotheses of Lemma~\ref{lem:rank},
\begin{equation}\label{e:regularizerprop}
    \left|\bigl(\Tr h_\eps(X)-\Tr h_\eps(Y)\bigr)-\bigl(\Tr h(X)-\Tr h(Y)\bigr)\right|
    \leq r\,w(\eps),
\end{equation}
with $w$ as in \eqref{e:w}. The factor $r$ may be replaced by $\max\{r_+,r_-\}$ as in Lemma~\ref{lem:rank}. In particular, the error is $O(\eps\log(1/\eps))$ as $\eps\downarrow0$, uniformly over all such pairs $X,Y$ for fixed $r$.
\end{proposition}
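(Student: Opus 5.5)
Set $f=h_\eps-h$ on $[-1,1]$. The left side of \eqref{e:regularizerprop} is then exactly $|\Tr f(X)-\Tr f(Y)|$, so Lemma~\ref{lem:rank} applies once we know $f$ is absolutely continuous on $[-1,1]$. It then gives the bound $r\int_{-1}^1|f'(t)|\dd$, or $\max\{r_+,r_-\}\int_{-1}^1|f'|\dd$ with the refined counting. The proof therefore reduces to two checks: $f$ is absolutely continuous, and $\int_{-1}^1|f'|\dd = w(\eps)$.

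For the derivative, differentiate \eqref{e:regularizer} in $t$:
\[
\partial_t e(x,t)=\tfrac12\log\frac{x-t}{x+t}.
\]
Hence
\[
f'(t)=\tfrac12\log\frac{1+\eps-t}{1+\eps+t}-\tfrac12\log\frac{1-t}{1+t}
\]
for $|t|<1$. This has only logarithmic singularities at $\pm1$, so it is integrable. Since $h$ and $h_\eps$ are continuous on $[-1,1]$ and smooth inside, $f$ is absolutely continuous.

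For the sign of $f'$, note that $f$ is even, so $f'$ is odd. A convenient route is convexity. We have
\[
f''(t)=-\frac{1+\eps}{(1+\eps)^2-t^2}+\frac{1}{1-t^2},
\]
and this is positive on $(-1,1)$ because $\frac{1+\eps}{(1+\eps)^2-t^2}<\frac{1}{1-t^2}$, which is equivalent to $(1+\eps)(1-t^2)<(1+\eps)^2-t^2$, i.e.\ $\eps t^2<\eps(1+\eps)$. Thus $f'$ is increasing and odd. It follows that $f'\le0$ on $[-1,0]$ and $f'\ge0$ on $[0,1]$.

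Combining evenness with the sign information,
\[
\int_{-1}^1|f'|\dd=2\bigl(f(1)-f(0)\bigr).
\]
The endpoint values are elementary. We have $h(0)=\log2$ and $h(1)=0$. For the regularized function,
\[
h_\eps(0)=-(1+\eps)\log\tfrac{1+\eps}{2},\qquad
h_\eps(1)=-\tfrac{2+\eps}{2}\log\tfrac{2+\eps}{2}-\tfrac{\eps}{2}\log\tfrac{\eps}{2}.
\]
The remaining step is to compute $2(f(1)-f(0))$ and check that it equals $w(\eps)$. This is where I expect the main obstacle: if the algebra yields an expression that is not exactly $w(\eps)$, I would instead show it is bounded above by $w(\eps)$. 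The natural tool is monotonicity or concavity of $x\log x$, for example the comparison $(2+\eps)\log(1+\eps/2)\le(1+\eps)\log(1+\eps)\cdot(\text{const})$-type estimates.

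Once the bound $\int_{-1}^1|f'|\dd\le w(\eps)$ is in hand, Lemma~\ref{lem:rank} gives \eqref{e:regularizerprop}, including the refinement with $\max\{r_+,r_-\}$. The final assertion then follows from the expansion $w(\eps)=\eps\log(1/\eps)+O(\eps)$. This error bound does not depend on the dimension or on the particular pair $X,Y$, so it is uniform for fixed $r$.
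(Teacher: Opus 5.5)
Your strategy is the paper's: apply Lemma~\ref{lem:rank} to $f=h_\eps-h$, and compute $\int_{-1}^1|f'|\dd$ using that $f'$ is odd and nonnegative on $[0,1]$. There is a small slip in the convexity step: clearing denominators in $f''>0$ gives $-\eps t^2<\eps(1+\eps)$, i.e.\ $\eps(1+\eps+t^2)>0$, not $\eps t^2<\eps(1+\eps)$. The conclusion is unaffected.

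The genuine gap is the final step. You leave it open, and it does not go the way you expect. Carrying out your endpoint computation gives
\[
2\bigl(f(1)-f(0)\bigr)=2(1+\eps)\log(1+\eps)-\eps\log\eps-(2+\eps)\log(2+\eps)+2\log2=:v(\eps).
\]
This is strictly smaller than $w(\eps)$, since
\[
w(\eps)-v(\eps)=(2+\eps)\log(2+\eps)-2\log2-(1+\eps)\log(1+\eps)=\int_1^2\log\Bigl(1+\frac{\eps}{s}\Bigr)\,ds>0 .
\]
So the planned check ``$\int|f'|=w(\eps)$'' fails. The proposition then rests on the inequality $v(\eps)\le w(\eps)$, which you never prove. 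The tool you point to is also misdirected. First, $x\log x$ is convex, not concave. Second, $(2+\eps)\log(1+\eps/2)$ enters $v(\eps)$ with a minus sign, so an upper bound on it of the type you sketch cannot bound $v(\eps)$ from above.

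The missing step is short. Let $\phi(x)=x\log x$. Because $\phi$ is convex, its increments over intervals of fixed length $\eps$ grow with the left endpoint. Hence $\phi(2+\eps)-\phi(2)\ge\phi(1+\eps)-\phi(1)$, which is exactly $w(\eps)\ge v(\eps)$.

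The paper avoids endpoint values altogether. On $(0,1)$ it writes
\[
2f'(t)=\log\Bigl(1+\frac{\eps}{1-t}\Bigr)-\log\Bigl(1+\frac{\eps}{1+t}\Bigr),
\]
discards the subtracted nonnegative term, and substitutes $s=1-t$. This gives $\int_0^1\log(1+\eps/s)\,ds=w(\eps)$ directly. Your exact value $v(\eps)$ is the sharper quantity the paper records in the remark after the proposition. With either fix, the rest of your argument goes through, including the $\max\{r_+,r_-\}$ refinement and the uniformity claim.
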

\begin{proof}
The entropy function $h$ is absolutely continuous on $[-1,1]$ because
$$
    h'(t)=\frac12\log\frac{1-t}{1+t},
$$
on $(-1,1)$, and its logarithmic endpoint singularities are integrable. Since $h_\eps$ is smooth on $[-1,1]$ for $\eps>0$, the difference $a_\eps=h_\eps-h$ is also absolutely continuous. Differentiating \eqref{e:regularizer} gives
\begin{equation}
    a_\eps'(t)=\frac12\log\frac{(1+\eps-t)(1+t)}{(1+\eps+t)(1-t)},\qquad -1<t<1.
\end{equation}
This derivative is odd, and it is nonnegative for $0<t<1$ because the numerator exceeds the denominator by $2\eps t$. It follows that
$$
\begin{aligned}
    \int_{-1}^1|a_\eps'(t)|\dd
    &=\int_0^1\left[\log\left(1+\frac{\eps}{1-t}\right)
    -\log\left(1+\frac{\eps}{1+t}\right)\right]\dd\\
    &\leq\int_0^1\log\left(1+\frac{\eps}{s}\right)\,ds
    =(1+\eps)\log(1+\eps)-\eps\log\eps=w(\eps).
\end{aligned}
$$
Applying Lemma~\ref{lem:rank} to $a_\eps$ gives \eqref{e:regularizerprop}, and the final assertion follows from $w(\eps)=\eps\log(1/\eps)+O(\eps)$ as $\eps\downarrow0$.
\end{proof}

\begin{remark}
The exact derivative integral is
\begin{equation}
\begin{split}
    v(\eps):=\int_{-1}^1|a_\eps'(t)|\dd
    &=2(1+\eps)\log(1+\eps)-\eps\log\eps\\
    &\quad-(2+\eps)\log(2+\eps)+2\log2.
\end{split}
\end{equation}
It satisfies $0\leq v(\eps)\leq w(\eps)$ and $w(\eps)-v(\eps)=O(\eps)$ as $\eps\downarrow0$. We can therefore replace $w(\eps)$ by $v(\eps)$ in Proposition~\ref{prop:regularization} and Theorem~\ref{thm:uniform}. We use the simpler expression $w$ in the limit argument; no optimality claim is made for either bound.
\end{remark}

\begin{proof}[Proof of Theorem~\ref{thm:uniform}]
By Lemma~\ref{lem:contraction}, the perturbation $A_{m,n}-B_{m,n}$ has exactly one positive and one negative eigenvalue. The refined part of Proposition~\ref{prop:regularization} therefore gives
$$
    |J_{m,n}^{(\eps)}-J_{m,n}|\leq w(\eps).
$$
To prove the sign, recall $a_\eps=h_\eps-h$. A second differentiation gives
$$
    a_\eps''(t)=\frac{\eps(1+\eps+t^2)}
    {(1-t^2)((1+\eps)^2-t^2)}>0,\qquad -1<t<1.
$$
Since $a_\eps$ is continuous at the endpoints, it is convex on $[-1,1]$.

We now use the convexity of $a_\eps$ to prove that
\begin{equation}\label{e:convexity}
    \Tr a_\eps(B_{m,n})\leq\Tr a_\eps(A_{m,n}).
\end{equation}
Indeed, choose an orthonormal eigenbasis $v_1,\ldots,v_{m+n}$ of $B_{m,n}$ such that each vector is supported in one of the two blocks, and let $\mu_j$ be the corresponding eigenvalues. The off-diagonal blocks do not contribute to the diagonal matrix elements of such vectors, so
$$
    \langle A_{m,n}v_j,v_j\rangle=\langle B_{m,n}v_j,v_j\rangle=\mu_j.
$$
Let $\nu_j$ be the spectral measure of $A_{m,n}$ at $v_j$. Since $\|v_j\|=1$, $\nu$ is a probability measure, and it is supported in $[-1,1]$ by Lemma~\ref{lem:contraction}. Its barycenter is $\mu_j$ by the previous display,
$$
    \int_{-1}^1 t\,d\nu_j(t)=\langle A_{m,n}v_j,v_j\rangle=\mu_j,
$$
so Jensen's inequality gives
$$
    \int_{-1}^1 a_\eps(t)\,d\nu_j(t)\geq a_\eps(\mu_j).
$$
Summing over $j$, and using
$$
    \Tr a_\eps(A_{m,n})=\sum_j\int_{-1}^1 a_\eps(t)\,d\nu_j(t),
    \qquad\Tr a_\eps(B_{m,n})=\sum_j a_\eps(\mu_j),
$$
yields \eqref{e:convexity}. Therefore,
$$
    J_{m,n}-J_{m,n}^{(\eps)}=\Tr a_\eps(A_{m,n})-\Tr a_\eps(B_{m,n})\geq0,
$$
which proves \eqref{e:uniform}.
\end{proof}

\section{The contour formula and the limit passage}\label{sec:passage}
We first show that $J_{m,n}^{(\eps)}$ is represented by the determinant ratio used in \cite{BGIKMMV20}. Set
\begin{equation}\label{e:ratio}
    \Delta_{m,n}(\lambda)
    =\frac{\det(\lambda I-A_{m,n})}{\det(\lambda I-B_{m,n})}
    =\frac{\det(\lambda I-A_{m,n})}{D_m[\lambda+g]D_n[\lambda+g]},
\end{equation}
where $D_\ell[\phi]=\det T_\ell[\phi]$. This is the ratio denoted by $\widehat D$ in \cite[Theorem 2.1]{BGIKMMV20}. By Lemma~\ref{lem:contraction}, it is holomorphic and nonzero on $\C\setminus[-1,1]$.

Let $\Gamma_\eps$ be the circle $|\lambda|=1+\eps/2$, oriented counterclockwise, as allowed in \cite[Section 2, following equation (2.4)]{BGIKMMV20}. Use the principal logarithms in $e(1+\eps,\lambda)$; since the branch points lie at $\pm(1+\eps)$, this function is holomorphic on a neighborhood of the closed disk bounded by $\Gamma_\eps$. The residue theorem gives
\begin{equation}\label{e:contour}
    J_{m,n}^{(\eps)}=-\frac1{2\pi i}\int_{\Gamma_\eps}
    e(1+\eps,\lambda)\frac{\Delta_{m,n}'(\lambda)}{\Delta_{m,n}(\lambda)}\,d\lambda.
\end{equation}
Indeed, the logarithmic derivative of each characteristic determinant is the sum of $(\lambda-\mu)^{-1}$ over its eigenvalues $\mu$, counted with multiplicity, and all of these lie inside $\Gamma_\eps$ by Lemma~\ref{lem:contraction}. Formula~\eqref{e:contour} therefore agrees exactly with the trace definition \eqref{e:traces}. Writing the logarithmic derivative as a quotient avoids any choice of a branch of $\log\Delta_{m,n}$.

We next make \eqref{e:P1} explicit by combining results of \cite{BGIKMMV20}. The determinant reduction in \cite[Section 3]{BGIKMMV20}, together with Proposition~3.1 there, gives
\begin{equation}
    \Delta_{m,n}(\lambda)=1-q_m(\lambda)q_n(\lambda),\qquad
    q_\ell(\lambda)=u_\ell^T(\lambda I+T_\ell[g])^{-1}u_\ell,
\end{equation}
where the pairing is bilinear rather than sesquilinear; each $q_\ell$ is holomorphic on $\C\setminus[-1,1]$. Lemma~3.2 of that paper establishes
\begin{equation}\label{e:qresult}
    q_\ell(\lambda)=i\tan\left(\frac\pi2\beta(\lambda)\right)+O(\ell^{-1/4}),
    \qquad \beta(\lambda)=\frac1{2\pi i}\log\frac{\lambda+1}{\lambda-1},
\end{equation}
uniformly on compact subsets of $|\lambda|>1$, where the logarithm is principal. Hence, $\Delta_{m,n}\to\Delta_\infty:=1+\tan^2\bigl(\tfrac\pi2\beta\bigr)$ locally uniformly there, and $\Delta_\infty$ is nonvanishing by \cite[Lemma 9.1]{BGIKMMV20}. Fix $\eps>0$. Then $|\Delta_\infty|\geq c>0$ on a compact neighborhood of $\Gamma_\eps$, so $|\Delta_{m,n}|\geq c/2$ there for all large $m,n$; local uniform convergence also gives convergence of the derivatives on that neighborhood, by the Cauchy estimates. Therefore $\Delta_{m,n}'/\Delta_{m,n}\to\Delta_\infty'/\Delta_\infty$ uniformly on $\Gamma_\eps$, and \eqref{e:contour} yields \eqref{e:P1} with
$$
    L_\eps=-\frac1{2\pi i}\int_{\Gamma_\eps}e(1+\eps,\lambda)
    \frac{\Delta_\infty'(\lambda)}{\Delta_\infty(\lambda)}\,d\lambda.
$$
This is the quantity whose $\eps\downarrow0$ limit is computed in \cite[Section 9]{BGIKMMV20}, giving \eqref{e:P2}. We emphasize that no uniformity of \eqref{e:qresult} as $\eps\downarrow0$ is assumed anywhere below.

\begin{proof}[Proof of Theorem~\ref{thm:main}]
Fix $\eps>0$. By the triangle inequality,
$$
    |J_{m,n}-L|\leq |J_{m,n}-J_{m,n}^{(\eps)}|
    +|J_{m,n}^{(\eps)}-L_\eps|+|L_\eps-L|.
$$
Taking the upper limit as $m,n\to\infty$ and using Theorem~\ref{thm:uniform} and \eqref{e:P1}, we find
$$
    \limsup_{m,n\to\infty}|J_{m,n}-L|\leq w(\eps)+|L_\eps-L|.
$$
The left side does not depend on $\eps$; letting $\eps\downarrow0$ and using \eqref{e:P2} makes both terms on the right tend to zero, which proves \eqref{e:main1}. At each fixed $m,n$ the inner limit as $\eps\downarrow0$ equals $J_{m,n}$, so \eqref{e:main1} gives the first equality in \eqref{e:main2}, and \eqref{e:P1}--\eqref{e:P2} give the second.

Finally, for the sequences in \eqref{e:main3},
$$
    |J_{m_j,n_j}^{(\eps_j)}-L|
    \leq w(\eps_j)+|J_{m_j,n_j}-L|\to 0,
$$
which is joint convergence with no restriction on the relative growth of the two block lengths.
\end{proof}

\begin{remark}
The comparison used here is available because $A_{m,n}$ and $B_{m,n}$ differ by a perturbation of rank two. It does not by itself settle the single-interval limit passages in \cite{JK04,IJK07}: The endpoint normalization in \cite[equations (27)--(28)]{IJK07} is a subtraction rather than a bounded-rank comparison, so a different uniform estimate is needed there.
\end{remark}

\begin{remark}
No convergence rate in the block lengths is proved here. Table~\ref{tab:numerics} records double-precision values of $J_{m,n}$ computed directly from the eigenvalues of \eqref{e:matrices}. For equal block lengths we use the exact orthogonal decomposition into $-T_m[g]+u_mu_m^T$ and $-T_m[g]-u_mu_m^T$. The even Fourier coefficients are set exactly to zero; eigenvalues lying outside $[-1,1]$ only by round-off are clipped to the interval before evaluating $h$. The table includes odd and mixed-parity lengths as well as unequal block lengths. 
\end{remark}

\begin{table}[htbp]
\centering
\begin{tabular}{rrrr}
\toprule
$m$ & $n$ & $J_{m,n}$ & $(J_{m,n}-L)/(1/m+1/n)$\\
\midrule
$64$ & $64$ & $0.381140377$ & $-0.164928$\\
$127$ & $127$ & $0.383683587$ & $-0.165784$\\
$128$ & $128$ & $0.383703877$ & $-0.165791$\\
$256$ & $256$ & $0.384995710$ & $-0.166227$\\
$512$ & $512$ & $0.385644179$ & $-0.166447$\\
$1024$ & $1024$ & $0.385969055$ & $-0.166557$\\
$1500$ & $1500$ & $0.386072239$ & $-0.166591$\\
\midrule
$127$ & $128$ & $0.383693731$ & $-0.165788$\\
$511$ & $512$ & $0.385643544$ & $-0.166446$\\
\midrule
$64$ & $1024$ & $0.383544962$ & $-0.165611$\\
$200$ & $1000$ & $0.385296548$ & $-0.166302$\\
\bottomrule
\end{tabular}
\caption{Numerical one-site entropy differences, with $L=2\log2-1$. The entries are floating-point computations, not certified error bounds.}
\label{tab:numerics}
\end{table}

The ratios in the last column are consistent with convergence to $-1/6$, suggesting the following first-order expansion.

\begin{conjecture}\label{conj:rate}
As $\min\{m,n\}\to\infty$, the trace differences \eqref{e:traces} satisfy
$$
    J_{m,n}=2\log2-1-\frac16\left(\frac1m+\frac1n\right)
    +o\left(\frac1m+\frac1n\right).
$$
\end{conjecture}

The estimate \eqref{e:qresult} concerns an inverse-Toeplitz quantity on fixed compact subsets of $|\lambda|>1$. It does not by itself control the error after letting $\eps\downarrow0$ and therefore does not establish Conjecture~\ref{conj:rate}.

\section*{Acknowledgments}
Bellavita is member of the Gruppo Nazionale per l’Analisi Matematica, la Probabilità e le loro Applicazioni (GNAMPA) of the Istituto Nazionale di Alta Matematica (INdAM). Virtanen was supported in part by the Engineering and Physical Sciences Research Council grant EP/Y008375/1 and the Research Council of Finland grant 375631.

\end{document}